\newcommand{\cO}{\mathcal{O}}
\newcommand{\beq}{\begin{equation}}
\newcommand{\eeq}{\end{equation}}
\def\bals#1\eals{\begin{align*} #1 \end{align*}}
\def\bal#1\eal{\begin{align} #1 \end{align}}
\newcommand\Dom\Omega
\newcommand\ZZ{\mathbb{Z}}
\newcommand\Lap\Delta
\def\bpde#1\epde{\[\left\{\begin{aligned}#1\end{aligned}\right. \]}
\def\inbpde#1\inepde{\left\{\begin{aligned}#1\end{aligned}\right.}
\def\binpde#1\einpde{\left\{\begin{aligned}#1\end{aligned}\right.}
\def\b0{\mathbf{0}}
\def\bbmat{\begin{bmatrix}[r]}
\def\ebmat{\end{bmatrix}}
\newcommand{\barr}{\begin{array}}
\newcommand{\ea}{\end{array}}
\newcommand{\bea}{\begin{eqnarray}}
\newcommand{\eea}{\end{eqnarray}}
\newcommand{\bt}{\begin{table}}
\newcommand{\et}{\end{table}}
\theoremstyle{plain}
\theoremstyle{definition}
\numberwithin{equation}{section}
\newcommand{\TheTitle}{Exact and Fast Inversion of the Approximate Discrete
Radon Transform}
\begin{document}

\title{\TheTitle}

\author{Donsub Rim%
  \thanks{Courant Institute of Mathematical Sciences, %
        New York University, 251 Mercer St., New York, NY 10012 %
  (\email{{\tt dr1653@nyu.edu}}).}%
}
\maketitle

\begin{abstract} 
We give an exact inversion formula for the approximate discrete Radon transform
introduced in [Brady, \emph{SIAM J. Comput}., 27(1), 107--119] that is
of cost $\cO(N \log N)$ for a square 2D image with $N$ pixels.
\end{abstract}

\section{Introduction}

The Radon transform is a linear transform that integrates a real-valued
function on a $d$-dimensional Euclidean space over all possible
$(d-1)$-dimensional hyperplanes \cite{helgason}. The transform is a widely used
model in tomography, and many inverse problems involving the transform or its
variant have been carefully studied \cite{natterer}. 

We will refer to the Radon transform as the continuous Radon transform, to
distinguish it from the approximate discrete Radon transform (ADRT), which
is the focus of this paper; a discrete version of the transform that
replaces the smooth hyperplanes by broken pixelated lines, called \emph{digital
lines} \cite{brady,GD96}. ADRT is computed by summing the pixel values that lie
in these lines. The digital lines are computed recursively, yielding a fast
transform of computational cost $\cO(N \log N)$ for a square 2D image with $N$
number of pixels.

We will show that the exact inverse can be computed with the same complexity,
in contrast to other discrete versions of the continuous transform. This is
somewhat surprising: despite the known inversion formula for the continuous
case, computing the inverse requires more effort than the forward transform,
and the digital lines in ADRT converges to straight lines upon repeated
refinement. Moreover, the inverse can be computed using only partial data, that
is, one quadrant of the ADRT that correspond to angles in $[0,\pi/4]$ for the
continuous transform.

It was observed that the inverse of ADRT can be computed to numerical precision
by employing a multigrid method \cite{pressdrt}, and this was further validated
using the conjugate gradient method \cite{radonsplit}. However, the simple
formula in this work is new, to the best of our knowledge.

\section{Main Result}

Our main assertion is that the ADRT is exactly invertible from partial
data, with the same computational complexity as the forward transform.

\begin{theorem}\label{thm}
A 2D square image with $N$ pixels can be computed exactly from its
single-quadrant ADRT in $\cO(N \log N)$ operations.  
\end{theorem}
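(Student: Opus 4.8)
The plan is to exploit the recursive, multiscale definition of the single-quadrant ADRT and to show that each level of the recursion is an invertible linear map that can be undone in $\cO(N)$ operations. Write $N = 4^n$, so the image is $2^n \times 2^n$, and recall that the quadrant transform is built by $n$ successive \emph{merge} steps $C_1,\ldots,C_n$: the array at level $\ell-1$ stores partial digital-line sums over horizontal strips of height $2^{\ell-1}$, indexed by a strip number, a slope $s$, and a column position $x$, and $C_\ell$ stacks adjacent strips pairwise to produce the height-$2^\ell$ sums. Since the total number of stored sums is $N$ at every level and $C_\ell$ forms each output as the sum of exactly two inputs (a bottom segment plus the horizontally shifted top segment that continues it), the forward transform $\cR = C_n \circ \cdots \circ C_1$ costs $n\cdot\cO(N) = \cO(N\log N)$ as already noted, and it suffices to invert each $C_\ell$ in $\cO(N)$ and compose the inverses in reverse order.

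First I would fix one merged strip and write the merge relation explicitly. Writing $B(\cdot,\cdot)$ and $T(\cdot,\cdot)$ for the bottom and top sub-strip sums, a level-$\ell$ sum of slope $s$ and column $x$ satisfies
\[
S^\ell(s,x) = B\big(\lfloor s/2\rfloor,\,x\big) + T\big(\lceil s/2\rceil,\, x+\lfloor s/2\rfloor\big),
\]
because the digital line of slope $s$ splits into a bottom piece of slope $\lfloor s/2\rfloor$ and a top piece of slope $\lceil s/2\rceil$ that begins where the bottom piece ends. Grouping the outputs by $k=\lfloor s/2\rfloor$ and subtracting the even case $s=2k$ from the odd case $s=2k+1$ cancels the bottom term and yields, after the substitution $y=x+k$,
\[
T(k+1,y) - T(k,y) = S^\ell(2k+1,\,y-k) - S^\ell(2k,\,y-k).
\]
These are first differences of the top sub-strip sums in the slope index, so a single cumulative summation recovers every $T(k,\cdot)$ from one anchor value $T(0,\cdot)$, after which $B(k,x) = S^\ell(2k,x) - T(k,x+k)$ recovers all bottom sums. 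Both sweeps touch each entry a constant number of times, giving the required $\cO(N)$ cost per level.

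The hard part, and the step I would spend the most care on, is anchoring this telescoping recursion and showing that the merge is genuinely a bijection. The difference identity determines the top sub-strip sums only up to their slope-$0$ (vertical) values $T(0,\cdot)$, and the relation $S^\ell(0,x)=B(0,x)+T(0,x)$ alone does not separate $B(0,\cdot)$ from $T(0,\cdot)$. The resolution must come from the boundary bookkeeping built into the quadrant transform: digital-line sums whose segments protrude past the column range of the image are padded by zeros, and the extreme slope of the top sub-strip (the case $\lceil s/2\rceil = 2^{\ell-1}$ arising at $s=2^\ell-1$) is a boundary quantity that can be read off directly from the output. I would make precise how these zero-padded and extreme-slope entries pin down the anchor, thereby exhibiting $C_\ell$, in a suitable ordering of the indices, as a unit-triangular map whose inverse is exactly the difference-and-cumulative-sum procedure above. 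Establishing this invertibility cleanly — including the off-by-one in the slope dimension between the two sub-strips — is the crux; the $\cO(N\log N)$ bound and the reduction to single-quadrant data then follow at once, since every merge step and its inverse involve only sums indexed within the one quadrant.
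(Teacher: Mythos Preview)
Your high-level plan---invert each merge level in $\cO(N)$ by subtracting a pair of outputs to obtain a first difference, then recover the sub-strip sums by a cumulative sum---is exactly the paper's. The gap is in the merge relation you wrote down, and it is precisely what manufactures the anchoring difficulty you flag as ``the crux.'' In the ADRT recursion of this paper (Definition~2.2), a digital line with slope index $2t$ or $2t+1$ splits into two sub-lines \emph{both of slope $t$}; the parity of the parent slope affects only the \emph{intercept} of the second piece ($h+t$ versus $h+t+1$), never its slope. Your formula $S^\ell(s,x)=B(\lfloor s/2\rfloor,x)+T(\lceil s/2\rceil,x+\lfloor s/2\rfloor)$ instead places the parity in the slope of the top piece, and that is why your odd-minus-even subtraction produces a difference in the slope index with no natural zero to start the telescoping from---$T(0,\cdot)$ is a genuine column sum of the top sub-strip, not something any extreme-slope or padded output hands you.

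With the paper's recursion the very same odd-minus-even pairing you chose gives
\[
R^\ell_{n,m}(h,2t+1)-R^\ell_{n,m}(h,2t)=R^{2\ell}_{n,m-1}(h+t+1,t)-R^{2\ell}_{n,m-1}(h+t,t),
\]
a first difference in the \emph{intercept} $h$; the paper pairs $R^\ell_{n,m}(h+1,2t)$ with $R^\ell_{n,m}(h,2t+1)$ to get the $h$-difference of the other sub-strip. The anchor is then trivial and needs no special boundary bookkeeping: for each fixed $t$ the sub-strip sums vanish once $h<-t+1$, simply because the digital line lies entirely outside the image support (Lemma~2.4(v)), so one cumulatively sums from $h=-t$ upward. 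That single observation is the whole content of the key lemma, and it dissolves the difficulty you were left trying to patch with extreme slopes and zero padding.
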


Let us be given a 2D square image $A$ containing $N$ pixels taking on real
values, with the dimensions $2^n \times 2^n$. We will first define the ADRT for
a rectangular sub-images of size $2^n \times 2^m$ for $m < n$, called sections.

\begin{definition}[Section of an image]
Let the \emph{$\ell$-th section of a $2^n \times 2^n$ image $A$} be defined on
$\ZZ^2$ given by
\beq
    A^\ell_{n,m-1}(i,j)
    :=
    \begin{cases}
       A(i,j + (\ell-1)2^{m-1}) &\text{ if } i = 1 , \cdots, 2^n, %
                                         j =1 , \cdots, 2^{m-1},\\
       0 & \text{ otherwise,}
    \end{cases}
\eeq
where $\ell = 1,2, \cdots, 2^{n-m}$.
\end{definition}

Next, we define the broken lines used in ADRT.

\begin{definition}[Digital line]\label{def:dline}
A \emph{digital line} $D_{m} (h,s)$, for $h \in \ZZ$, $s = 1, \cdots, 2^m$ is a
subset of $\ZZ^2$ that is defined recursively. Letting $s = 2t$ or $s = 2t+1$,
\beq
    \left\{
    \begin{aligned}
    D_{m} (h, 2t+1) &:= D^1_{m-1}(h,t) \cup D^2_{m-1}(h+t+1,s),\\
    D_{m} (h, 2t)   &:= D^1_{m-1}(h,t) \cup D^2_{m-1}(h+t,s),
    \end{aligned}
    \right.
    \label{eq:dline}
\eeq
for $t=1, \cdots, 2^{m-1}$ and where
\beq
    D^1_m(h,s) := D_m(h,s),
    \quad
    D^2_m(h,s) := \{(i,j+2^m) : (i,j) \in D_m(h,s) \},
\eeq
and the relation is initialized by $D_0 (h,s) := \{(h,1): h \in \ZZ\}$.
\end{definition}

Then ADRT is a sum of pixels that lie on the digital lines.

\begin{figure}
\centering
\begin{tabular}{ccc}
\begin{tikzpicture}[scale=0.36]
\draw[step=1,gray,very thin] (-1,-1) grid (9,9);
\draw[fill=black] (0,0) -- (2,0) -- (2,1) -- (0,1) -- (0,0);
\draw[fill=black] (2,1) -- (4,1) -- (4,2) -- (2,2) -- (2,1);
\draw[fill=black] (4,2) -- (6,2) -- (6,3) -- (4,3) -- (4,2);
\draw[fill=black] (6,3) -- (8,3) -- (8,4) -- (6,4) -- (6,3);
\draw[black,thick,dashed] (0,-1) -- (0,9);
\draw[black,thick,dashed] (8,-1) -- (8,9);
\draw[black] (0,0.5) node[anchor=east] {$h=1$};
\draw[black] (2.5,0.5) node[anchor=west] {$s=3$};
\draw[fill=blue] (0,4) -- (2,4) -- (2,5) -- (0,5) -- (0,4);
\draw[fill=blue] (2,5) -- (6,5) -- (6,6) -- (2,6) -- (2,5);
\draw[fill=blue] (6,6) -- (8,6) -- (8,7) -- (6,7) -- (6,6);
\draw[blue] (0,4.5) node[anchor=east] {$h=4$};
\draw[blue] (2.5,6.5) node[anchor=west] {$s=2$};
\end{tikzpicture}
&
\medmuskip=0.0\medmuskip
\thickmuskip=0.0\thickmuskip
\begin{tikzpicture}[scale=0.9]
\draw (-2,0) -- (-0.1,0) -- (-0.1,4) -- (-2,4) -- (-2,0); 
\draw (2,0) -- (0.1,0) -- (0.1,4) -- (2,4) -- (2,0); 
\draw[blue,very thick] (-2,0.5) 
           -- (-0.1,2.0) node[sloped,midway,below] {\scriptsize $D^1_m(h,s)$};
\draw (-0.1,2.0) node[anchor=east,blue] {\small $h+s$} ;
\draw[blue,thin] (-0.1,2.0) -- (0.1,2.0);
\draw[blue,very thick] (2,3.5) 
           -- (0.1,2.0) node[sloped,midway,above] {\scriptsize $D^2_m(h+s,s)$};
\draw (0.1,2.0) node[anchor=west,blue] {\small $h+s$};
\end{tikzpicture} 
&
\medmuskip=0.1\medmuskip
\thickmuskip=0.125\thickmuskip
\begin{tikzpicture}[scale=0.9]
\draw (-2,0) -- (-0.1,0) -- (-0.1,4) -- (-2,4) -- (-2,0); 
\draw (2,0) -- (0.1,0) -- (0.1,4) -- (2,4) -- (2,0); 
\draw[black,very thick] (-2,0.5) 
    -- (-0.1,2.1) node[sloped,midway,below]{\scriptsize $D^1_m(h,s)$};  
\draw (-0.1,2.1) node[anchor=east] {\small $h+s$} ;
\draw[black,very thick] (2,3.8) 
        -- (0.1,2.27) node[sloped,midway,above]{\scriptsize $D^2_m(h+s+1,s)$};
\draw (0.1,2.27) node[anchor=west] {\small $h+s+1$};
\draw[black,thin] (0.1,2.27) -- (-0.1,2.1);
\end{tikzpicture} 
\end{tabular}
\caption{Examples of digital lines (d-lines) $D_{m}(h,s)$ where $m=3$ (left)
and a diagram illustrating the recursion definition \cref{eq:dline} (middle,
right). In both figures the case when $s$ is even is depicted in blue, odd in
black, respectively.} 
\label{fig:dlines}
\end{figure}
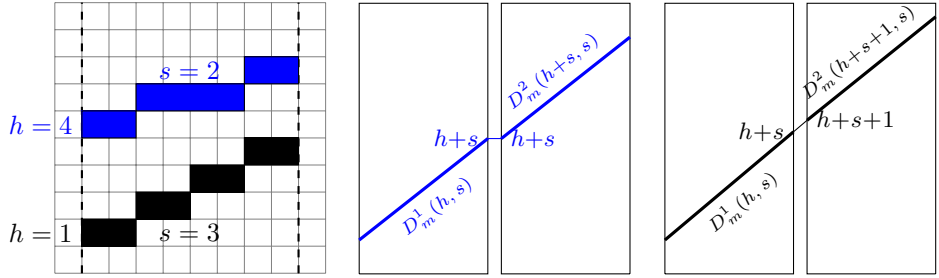

\begin{definition}[Single-quadrant ADRT]
We will denote the \emph{single-quadrant approximate discrete Radon transform
(ADRT) applied to the $\ell$-th section of the image $A$} by $R^\ell_{n,m}$,
defined to be the sum
\beq
    R^\ell_{n,m}(h,s) := \sum_{(i,j) \in D_m(h,s)} A^\ell_{n,m}(i,j),
    \quad \text{ where } \ell = 1, \cdots, 2^{n-m}.
\eeq  
In particular, when $m=n$ we call the sums simply the \emph{single-quadrant
ADRT of $A$}, and we denote it by $R_m := R^1_{n,n}$.
\end{definition}

A full ADRT is obtained when one computes a single-quadrant ADRT four times
upon flipping and rotating the image \cite{brady}. 

We state some basic properties of digital lines and the ADRT, omitting the
proof.

\begin{lemma}[Properties]\label{lem:prpt} ~
\begin{enumerate}[label=(\roman*)]
    \item $\{D_m(h,s): h \in \ZZ\}$ form a partition of 
    the set $\{(i,j): i \in \ZZ, j  = 1, \cdots, 2^m\}$.
    \item $D_m^1(h,s) \cap D_m^2(k,t) = \emptyset$,
    for all $h,k \in \ZZ$ and $s,t = 1, \cdots, 2^m$.
    \item 
$D_m(h,s) \subset \{(i,j) \in \ZZ^2: h \le i \le h+s, 1 \le j \le 2^m\}$.
    \item $R^\ell_{n,0}(h,1) = A(h,\ell)$ for $h \in \ZZ, \ell = 1, \cdots, 2^m$.
    \item $R^\ell_{n,m}(h,s) = 0$ when $h < -s+1$ or $h > 2^n$, 
          $s = 1, \cdots, 2^m$. 
\end{enumerate}
\end{lemma}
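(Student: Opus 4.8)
The plan is to establish the purely set-theoretic claims (i)--(iii) by induction on $m$ using the recursion \eqref{eq:dline}, and then to read off the two ADRT identities (iv)--(v) from the geometry together with the definitions of the section and the transform. The order I would follow is (iii), then (ii), then (i), then (iv) and (v), since each later claim leans on the earlier ones. Throughout I write $s=2t$ or $s=2t+1$ and use that the upper constituent of $D_m(h,s)$ is a level-$(m-1)$ digital line of the halved slope $t$, translated up by $2^{m-1}$ rows and right by $t$ (even case) or $t+1$ (odd case).

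First I would prove the containment (iii) by induction on $m$. The base case $m=0$ is immediate from $D_0(h,1)=\{(h,1)\}$. For the inductive step I apply \eqref{eq:dline}: the lower piece $D^1_{m-1}$ sits in rows $1\le j\le 2^{m-1}$ and, by the inductive hypothesis, in columns $[h,h+t]$, while the upper piece $D^2_{m-1}$ is the same kind of set shifted up by $2^{m-1}$ rows, hence in rows $2^{m-1}<j\le 2^m$ and in columns $[h+t,h+2t]$ (even) or $[h+t+1,h+2t+1]$ (odd). The union lies in rows $1\le j\le 2^m$ and in columns within $[h,h+s]$, which is exactly (iii). Claim (ii) then follows with almost no work: by (iii) the set $D^1_m(h,s)=D_m(h,s)$ lives in rows $1,\dots,2^m$, whereas $D^2_m(k,t)$ is a copy of $D_m(k,t)$ translated up by $2^m$ and so lives in rows $2^m+1,\dots,2^{m+1}$; these two row ranges are disjoint for every choice of indices.

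The heart of the lemma is the partition claim (i), which I would again prove by induction on $m$ for each fixed slope $s$. The base case is the observation that the singletons $\{(h,1)\}$, $h\in\ZZ$, tile the row $j=1$. For the inductive step with $s=2t$ (the odd case being identical up to the horizontal offset $t+1$), the recursion splits each line $D_m(h,2t)$ into its lower half $D^1_{m-1}(h,t)$ and its upper half $D^2_{m-1}(h+t,t)$, which occupy the disjoint row bands $1\le j\le 2^{m-1}$ and $2^{m-1}<j\le 2^m$. A pixel in the lower band lies in $D_m(h,2t)$ exactly when it lies in $D_{m-1}(h,t)$, and a pixel $(i,j)$ in the upper band lies in $D_m(h,2t)$ exactly when $(i,j-2^{m-1})$ lies in $D_{m-1}(h+t,t)$. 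Invoking the inductive hypothesis that $\{D_{m-1}(\cdot,t)\}$ partitions the height-$2^{m-1}$ strip, each pixel of either band belongs to exactly one line---directly in the lower band, and via the bijection $h\mapsto h+t$ on $\ZZ$ in the upper band---so the level-$m$ lines cover the height-$2^m$ strip without overlap. I expect this to be the main obstacle: one must keep the even and odd recursions straight, track the offsets $t$ and $t+1$ correctly, and extract both covering and disjointness from the single inductive hypothesis, rather than anything conceptually deep.

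Finally, I would derive (iv) and (v) from the definitions. For (iv), since $D_0(h,1)=\{(h,1)\}$ the sum defining $R^\ell_{n,0}(h,1)$ collapses to the single term $A^\ell_{n,0}(h,1)$, which by the section definition equals $A(h,1+(\ell-1))=A(h,\ell)$ for $h$ in range. For (v), I combine (iii) with the support of the section: $A^\ell_{n,m}$ vanishes outside columns $1\le i\le 2^n$, while by (iii) every pixel of $D_m(h,s)$ has column index in $[h,h+s]$. Hence if $h+s<1$, i.e. $h<-s+1$, or if $h>2^n$, the digital line meets no column in the support of the section, the defining sum is empty, and $R^\ell_{n,m}(h,s)=0$.
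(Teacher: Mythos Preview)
Your argument is correct and complete. The paper actually omits the proof of this lemma entirely (it states the properties and writes ``omitting the proof''), so there is nothing to compare against; your induction on $m$ via the recursion \eqref{eq:dline} is exactly the natural route, and your ordering (iii) $\Rightarrow$ (ii) $\Rightarrow$ (i), followed by reading off (iv) and (v) from the definitions and the support constraint, is the right dependency structure. One cosmetic point: in this paper the first index $i$ plays the role of the vertical (height) coordinate and $j$ the horizontal one, so your use of the word ``column'' for the $i$-range is slightly at odds with the paper's conventions, though the mathematics is unaffected.
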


The key lemma below states that the recursive definition in ADRT is reversible:
the ADRT of a section of an image can be computed from that of the whole. 

Note that by \cref{lem:prpt}, $R^\ell_{n,m}$ can have at most $2^n \cdot 2^m +
2^{m-1}(2^{m} + 1)$ non-zero entries.

\begin{lemma}\label{lem:localize}
Given $R^\ell_{n,m}$, $\{R^{2\ell-1}_{n,m-1},R^{2\ell}_{n,m-1}\}$ can be
computed in $\cO(M)$ operations, where $M := 2^{m-1}(2^{n+1} + 2^m +1)$ is the
size of $R^\ell_{n,m}$.
\end{lemma}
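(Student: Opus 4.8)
The plan is to first write down explicitly the forward recursion implicit in the ADRT definition, and then to invert that recursion by a telescoping (cumulative-sum) argument. Combining the digital-line recursion \cref{eq:dline} with the observation that the $\ell$-th section of height $2^m$ splits into a lower half equal to $A^{2\ell-1}_{n,m-1}$ and an upper half which, after the vertical shift in \cref{def:dline}, equals $A^{2\ell}_{n,m-1}$, one obtains for each half-slope $t = 1,\ldots,2^{m-1}$ the pair of identities
\begin{align}
R^\ell_{n,m}(h,2t)   &= R^{2\ell-1}_{n,m-1}(h,t) + R^{2\ell}_{n,m-1}(h+t,t),   \\
R^\ell_{n,m}(h,2t+1) &= R^{2\ell-1}_{n,m-1}(h,t) + R^{2\ell}_{n,m-1}(h+t+1,t).
\end{align}
The lemma is then exactly the statement that this system can be solved for the two level-$(m-1)$ sequences on the left given the level-$m$ data on the right.

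First I would fix $t$ and abbreviate $a(h) := R^{2\ell-1}_{n,m-1}(h,t)$, $b(h) := R^{2\ell}_{n,m-1}(h,t)$ for the unknowns and $u(h) := R^\ell_{n,m}(h,2t)$, $v(h) := R^\ell_{n,m}(h,2t+1)$ for the data. Subtracting the two identities at the same offset $h$ eliminates $a(h)$ and leaves the first-order difference relation $v(h) - u(h) = b(h+t+1) - b(h+t)$, a telescoping recurrence in $b$ with fully known right-hand side. To solve it I would invoke \cref{lem:prpt}(v), which forces $b(k) = R^{2\ell}_{n,m-1}(k,t)$ to vanish for $k > 2^n$ and for $k < -t+1$; starting from an index where $b$ is known to be zero and accumulating the increments in a single running sweep then recovers $b$ throughout its support, after which $a(h) = u(h) - b(h+t)$ recovers the remaining sequence by back-substitution.

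For the cost, \cref{lem:prpt}(v) confines each slice to the range $-s+1 \le h \le 2^n$, i.e.\ $\cO(2^n + 2^m)$ indices, and the differencing, the single cumulative sweep, and the back-substitution are each linear in that range. Summing over $t = 1,\ldots,2^{m-1}$ gives $\cO\!\left(2^{m-1}(2^n + 2^m)\right)$, which agrees with $M = 2^{m-1}(2^{n+1} + 2^m + 1)$ up to a constant, establishing the $\cO(M)$ bound.

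The hard part will not be the algebra but the index bookkeeping around the telescoping. I must make sure the accumulation is implemented as one running sum per slice, so that each slice costs $\cO(2^n + 2^m)$ rather than the $\cO((2^n+2^m)^2)$ of a naive per-entry summation; and I must check that the two boundary conditions supplied by \cref{lem:prpt}(v) are consistent, since summing all increments telescopes to $b(\text{top}) - b(\text{bottom})$, which must vanish for the one-sided accumulation to be well defined. That consistency is automatic for genuine ADRT data, so verifying the support ranges and this end-to-end cancellation is the only delicate point; everything else is routine.
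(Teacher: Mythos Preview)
Your proposal is correct and follows essentially the same route as the paper: both write out the forward recursion, subtract adjacent identities to obtain a first-order difference of a half-section transform, and recover it by a single cumulative sweep anchored at the zero boundary supplied by \cref{lem:prpt}(v). The only cosmetic difference is that the paper does this symmetrically for both halves (using $u(h+1)-v(h)$ to get the differences of $a$ as well, and then running two cumulative sums), whereas you telescope only for $b$ and back-substitute $a(h)=u(h)-b(h+t)$; the complexity is identical either way.
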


\begin{proof}
Let us define the differences
\beq
   \Delta^\ell_{n,m-1} (h,s) := R^\ell_{n,m-1}(h+1,s)
                              - R^\ell_{n,m-1}(h,s),
\eeq
then these differences can be computed by the relations
\begin{align}
    \Delta^{2\ell-1}_{n,m-1} (h,s) &= R^\ell_{n,m}(h+1,2s)
                                  - R^\ell_{n,m}(h,2s+1),
    \label{eq:diff_even}\\
    \Delta^{2\ell}_{n,m-1} (h,s) &= R^\ell_{n,m}(h-s,2s+1)
                                    - R^\ell_{n,m}(h-s,2s).
    \label{eq:diff_odd}
\end{align}
They follow from direct computation. For example,
\[
    \begin{aligned}
      R^\ell_{n,m} (h+1, 2s) &=
            \sum_{(i,j) \in D_m(h+1,s)} A^\ell_{n,m} (i,j) \\
            &=
            \sum_{(i,j) \in D^1_{m-1}(h+1,s)} A^{2\ell-1}_{n,m-1} (i,j)
            +
            \sum_{(i,j) \in D^2_{m-1}(h+s+1,s)} A^{2\ell}_{n,m-1} (i,j) \\
            &=
            R^{2\ell-1}_{n,m-1}(h+1,s)
            +
            \sum_{(i,j) \in D_{m-1}(h,s)}A^{2\ell}_{n,m-1}(i+s+1,j),\\
    \end{aligned}
\]
and also
\[
    \begin{aligned}
        R^\ell_{n,m} (h, 2s+1) &=
            \sum_{(i,j) \in D_{m}(h,s)} A^\ell_{n,m} (i,j) \\
            &=
            \sum_{(i,j) \in D^1_{m-1}(h,s)} A^{2\ell-1}_{n,m-1} (i,j)
            +
            \sum_{(i,j) \in D^2_{m-1}(h+s+1,s)} A^{2\ell}_{n,m-1} (i,j) \\
            &=
            R^{2\ell-1}_{n,m-1}(h,s)
            +
            \sum_{(i,j) \in D_{m-1}(h,s)} A^{2\ell}_{n,m-1}(i+s+1,j),\\
    \end{aligned}
\]
so we have \cref{eq:diff_even} upon subtraction. Since we now have the
differences, 
\beq
 R^{\ell}_{n,m-1} (h,s) = \sum_{k=-\infty}^{h-1} \Delta^{\ell}_{n,m-1} (k,s)
                           = \sum_{k=-s}^{h-1} \Delta^{\ell}_{n,m-1}(k,s),
\eeq
since part of the sum vanishes due to \cref{lem:prpt}.

For each fixed $s$, this sum allows one to compute all values of $h$ in one
sweep with one addition per one $h$: so the number of additions or subtractions
for computing $R^\ell_{n,m-1}$ for all $(h,s)$ is $2M = 2\cdot 2^{m-1}(2^{n+1}
+ 2^m + 1) = \cO(N)$.
\end{proof}

The main theorem follows.

\begin{proof}[\cref{thm}]
By \cref{lem:localize}, we compute  $\{R^\ell_{n,m-1}: \ell = 1, \cdots ,
2^{n-m+1}\}$ from $\{R^\ell_{n,m}: \ell = 1, \cdots, 2^{n-m}\}$. Repeating this
procedure for $m = 1, \cdots, n$, one computes $R^\ell_{n,0}$ and by doing so
one obtains the original image $A$. 

The cost is then
\begin{equation}
    \sum_{m=1}^n 2^{n-m} \cdot [2 \cdot 2^{m-1}(2^{n+1} + 2^m + 1)]
    = (2N + \sqrt{N}) \log N + 4N - 2\sqrt{N},
\end{equation}
that is, $\cO(N \log N)$.
\end{proof}

\section{Conclusion}
We showed that the inverse of ADRT can be computed from $\cO(N \log N)$
operations, using data from only one quadrant of the ADRT. This could be useful
for applications in computational applications of the ADRT, e.g. for use in
dimensional splitting or in approximating the continuous transform. It also has
implications for implementation, especially when decomposing the domain for
parallelization. This result indicates that the relation between the ADRT and
the continuous transform is more involved than it may first appear.

\bibliographystyle{siamplain}

\end{document}